\documentclass[12pt,a4paper]{article}
\usepackage{}
\setlength{\parindent}{22pt}
\setlength{\parskip}{5pt}
\setlength{\baselineskip}{9pt}
\setlength{\textheight}{22.5true cm}
\setlength{\textwidth}{16true cm}
\oddsidemargin 0pt
\raggedbottom
\headsep=0pt
\usepackage{amsfonts}
\usepackage{amssymb}
\usepackage{mathrsfs}
\usepackage{amsmath}
\usepackage{amsthm}
\usepackage{enumerate}
\usepackage{cite}
\usepackage[all]{xy}
\allowdisplaybreaks
\pagestyle{plain}

\newtheorem{defn}{Definition}[section]
\newtheorem{thm}[defn]{Theorem}
\newtheorem{lem}[defn]{Lemma}
\newtheorem{prop}[defn]{Proposition}
\newtheorem{cor}[defn]{Corollary}
\newtheorem{eg}[defn]{Example}
\newtheorem{re}[defn]{Remark}

\newcommand\relphantom[1]{\mathrel{\phantom{#1}}}

\newcommand{\bdefn}{\begin{defn}}
\newcommand{\edefn}{\end{defn}}
\newcommand{\bthm}{\begin{thm}}
\newcommand{\ethm}{\end{thm}}
\newcommand{\blem}{\begin{lem}}
\newcommand{\elem}{\end{lem}}
\newcommand{\bprop}{\begin{prop}}
\newcommand{\eprop}{\end{prop}}
\newcommand{\bcor}{\begin{cor}}
\newcommand{\ecor}{\end{cor}}
\newcommand{\beg}{\begin{eg}}
\newcommand{\eeg}{\end{eg}}
\newcommand{\bre}{\begin{re}}
\newcommand{\ere}{\end{re}}
\newcommand{\bpf}{\begin{proof}}
\newcommand{\epf}{\end{proof}}

\newcommand{\Z}{{\rm Z}}
\newcommand{\C}{{\rm C}}

\newcommand{\BDer}{{\rm BDer}}

\newcommand{\R}{\mathcal{R}}

\newcommand{\benu}{\begin{enumerate}}
\newcommand{\eenu}{\end{enumerate}}
\newcommand{\bc}{\begin{center}}
\newcommand{\ec}{\end{center}}
\newcommand{\bea}{\begin{eqnarray}}
\newcommand{\eea}{\end{eqnarray}}
\newcommand{\Bea}{\begin{eqnarray*}}
\newcommand{\Eea}{\end{eqnarray*}}
\newcommand{\beq}{\begin{equation}}
\newcommand{\eeq}{\end{equation}}
\newcommand{\Beq}{\begin{equation*}}
\newcommand{\Eeq}{\end{equation*}}
\newcommand{\bspl}{\begin{split}}
\newcommand{\espl}{\end{split}}

\numberwithin{equation}{section}

\bibliographystyle{plain}
\begin{document}
\title{{\bf Conformal biderivations of loop $W(a,b)$ Lie conformal algebra}}
\author{ Jun Zhao$^{1}$, Liangyun Chen$^{1*}$, Lamei Yuan$^{2}$
 \date{{\small $^{1}$School of Mathematics and Statistics, Northeast Normal
 University,\\
Changchun 130024, China\\
 $^{2}$Department of Mathematics, Harbin Institute of Technology, Harbin 150001, China}}}

\maketitle
\date{}

\begin{abstract}
In this paper, we study conformal biderivations of a Lie conformal algebra. First, we give the definition of conformal biderivation. Next, we determine the conformal biderivations of loop $W(a,b)$ Lie conformal algebra,
 loop Virasoro Lie conformal algebra and Virasoro Lie conformal algebra. Especially, all conformal biderivations on Virasoro Lie conformal algebra are inner conformal biderivations.

\bigskip

\noindent {\em Key words:} Lie conformal algebras, conformal biderivations, Virasoro Lie conformal algebra, loop Virasoro Lie conformal algebra, loop $W(a,b)$ Lie conformal algebra\\
\noindent {\em Mathematics Subject Classification(2010): 16S70, 17A42, 17B10, 17B56, 17B70}
\end{abstract}
\renewcommand{\thefootnote}{\fnsymbol{footnote}}
\footnote[0]{ Corresponding author(L. Chen): chenly640@nenu.edu.cn.}
\footnote[0]{Supported by NNSF of China (Nos. 11771069 and 11301109), NNSF of Jilin province (No. 20170101048JC) and the project of jilin province department of education (No. JJKH20180005K).}

\section{Introduction}
The notion of Lie conformal algebras was introduced by V. G. Kac as a formal language describing the singular part of the operator product expansion in conformal field theory. It is useful to research infinite dimensional Lie algebras satisfying the locality property. The structure theory and representation theory of some Lie conformal algebras have been extensively studied in \cite{BKV,DK}.

In recent years, biderivations have been aroused many scholar's great interests. In \cite{B1,B2}, Bre$\check{s}$ar et al. showed that all biderivations on commutative prime rings are inner biderivations and they determined the biderivations of semiprime rings.  In \cite{BZ}, Zhao Kaiming proved all skew-symmetric biderivations on a perfect and centerless Lie algebra are inner biderivations. In \cite{CSY,HWX,LGZ,T1,T2,WY}, authors give biderivations of specific examples of Lie algebras.

The main object investigated in this paper is the loop $W(a,b)$ Lie conformal algebra, denoted by $\rm CLW(a,b)$, which is a free $\mathbb{C}[\partial]$-module $\rm{CLW(a,b)}=(\bigoplus_{i\in\mathbb{Z}}\mathbb{C}[\partial]L_i)\oplus(\bigoplus_{i\in\mathbb{Z}}\mathbb{C}[\partial]G_i)$   with a $\mathbb{C}[\partial]$-basis $\{L_i,G_i|i\in\mathbb{Z}\}$ satisfying the following $\lambda$-brackets:
\begin{eqnarray*}
&&[{L_i}_\lambda {L_j}]=(\partial+2\lambda)L_{i+j},\, [{L_i}_\lambda {G_j}]=(\partial+(1-b)\lambda)G_{i+j},
\\ &&[{G_i}_\lambda {L_j}]=-(b\partial+(b-1)\lambda)G_{i+j},\, [{G_i}_\lambda {G_j}]=0.
\end{eqnarray*}
The relations between $W(a,b)$ and $\rm CLW(a,b)$ can be found in \cite{FWY}.

This paper is organized as follows. First, we give the definitions of conformal biderivation and inner conformal biderivation on a Lie conformal algebra. Next, we determine the conformal biderivations of loop $W(a,b)$ Lie conformal algebra, loop Virasoro Lie conformal algebra and Virasoro Lie conformal algebra. Especially, all conformal biderivations on Virasoro Lie conformal algebra are inner conformal biderivations.

Throughout this paper, all vector
spaces, linear maps, and tensor products are over the complex field $\mathbb{C}$. In addition to the standard
notations $\mathbb{Z}$ and $\mathbb{R}$, we use $\mathbb{Z}_{\geq 0}$ to denote the set of nonnegative integers.

\section{Conformal biderivations of a Lie conformal algebra}

The following notion was due to \cite{BKV}.
\bdefn\rm
A Lie conformal algebra is a $\mathbb{C}[\partial]$-module $\R$ endowed
with a bilinear map $\R\times\R\rightarrow\R[\lambda], (a,b)\mapsto[a_\lambda b]$, called the $\lambda$-bracket, satisfying the
following axioms ($x,y,z\in\R$):
\begin{enumerate}
\item[$(1)$] {\rm  Conformal\ sesquilinearity}:$ [(\partial x)_\lambda y]=-\lambda[x_\lambda y];$
\item[$(2)$] {\rm Skew-symmetry}:  $[x_\lambda y]=-[y_{-\partial-\lambda}x];$
\item[$(3)$] {\rm Jacobi\ identity}: $[x_\lambda[y_\mu z]]=[[x_\lambda y]_{\lambda+\mu}z]+[y_\mu[x_\lambda z]].$
\end{enumerate}
\edefn

\beg\rm\cite{DK}{\it Virasoro Lie conformal algebra} $\rm Vir$ is a free $\mathbb{C}[\partial]$-module $\rm{Vir}=\mathbb{C}[\partial]L$, satisfying the following $\lambda$-bracket:
\begin{eqnarray*}
[L_\lambda L]=(\partial+2\lambda)L.
\end{eqnarray*}
\eeg

\beg\rm\cite{WCY} {\it loop Virasoro Lie conformal algebra} $\rm CW$ is a free $\mathbb{C}[\partial]$-module
$\rm{CW}=\bigoplus_{i\in\mathbb{Z}}\mathbb{C}[\partial]L_i$ satisfying the following $\lambda$-brackets:
\begin{eqnarray*}
&&[{L_i}_\lambda {L_j}]=(\partial+2\lambda)L_{i+j}.
\end{eqnarray*}
\eeg

\beg\rm\cite{FWY} {\it loop $W(a,b)$ Lie conformal algebra $\rm CLW(a,b)$} is a free $\mathbb{C}[\partial]$-module $\rm{CLW(a,b)}=(\bigoplus_{i\in\mathbb{Z}}\mathbb{C}[\partial]L_i)\oplus(\bigoplus_{i\in\mathbb{Z}}\mathbb{C}[\partial]G_i)$, satisfying the following $\lambda$-brackets:
\begin{eqnarray*}
&&[{L_i}_\lambda {L_j}]=(\partial+2\lambda)L_{i+j},\, [{L_i}_\lambda {G_j}]=(\partial+(1-b)\lambda)G_{i+j},
\\ &&[{G_i}_\lambda {L_j}]=-(b\partial+(b-1)\lambda)G_{i+j},\, [{G_i}_\lambda {G_j}]=0.
\end{eqnarray*}
\eeg

\bdefn\rm\cite{DK}
Let $U$ and $V$ be two $\C[\partial]$-modules. A {\it conformal linear map} from $U$ to $V$ is a
 $\mathbb{C}$-linear map $f:U\rightarrow V[\lambda],$ denoted by $f_\lambda:U\rightarrow V[\lambda]$, such that
 $f_\lambda(\partial x)=(\partial+\lambda)f_\lambda(x)$.

Moreover, Let $W$ also be a $\C[\partial]$-module. A {\it conformal bilinear map} from $U\times V$ to $W$ is a
$\mathbb{C}$-bilinear map $f:U\times V\rightarrow W[\lambda]$, denoted by $f_\lambda:U\times V\rightarrow
W[\lambda]$, such that $f_\lambda(\partial x,y)=-\lambda f_\lambda(x,y)$ and $f_\lambda(x,\partial
y)=(\partial+\lambda)f_\lambda(x,y)$.
\edefn

\bdefn\rm Let $\R$ be a Lie conformal algebra. We call a conformal bilinear map
$\phi_\lambda:\R\times\R\rightarrow\R[\lambda]$ {\it skew-symmetric} if it satisfies
$\phi_\lambda(x,y)=-\phi_{-\partial-\lambda}(y,x)$.                                                                                                   \edefn

\bdefn\rm Let $\R$ be a Lie conformal algebra. We call a conformal bilinear map $\phi_\lambda:\R\times\R\rightarrow\R[\lambda]$ a {\it conformal biderivation} of $\R$ if it satisfies the following equations:
\begin{eqnarray}
&&\phi_\lambda(x,y)=-\phi_{-\partial-\lambda}(y,x);\label{def1a}\\
&&\phi_{\lambda}(x,[y_\mu z])=[(\phi_\lambda(x,y))_{\lambda+\mu}z]+[y_\mu\phi_\lambda(x,z)]\label{def1b}.
\end{eqnarray}
\edefn

\begin{re}\rm
If $\phi_\lambda$ is a conformal biderivation of Lie conformal algebra $\R$, then $\phi_\lambda(x,\cdot)$ is a conformal derivation obviously.
\end{re}

\begin{lem}
Let $\phi_\lambda$ be a conformal biderivation of Lie conformal algebra $\R$. Then Eq.\eqref{def1b} is equivalent to
\begin{eqnarray}
\phi_{\lambda+\mu}([x_\mu y],z)=[x_\mu\phi_\lambda(y,z)]-[y_\lambda\phi_\mu(x,z)]\label{lem1}.
\end{eqnarray}
\end{lem}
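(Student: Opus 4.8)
The plan is to treat \eqref{def1a}---the skew-symmetry of $\phi_\lambda$---as a standing hypothesis (it holds because $\phi_\lambda$ is assumed to be a conformal biderivation) and to show that, modulo \eqref{def1a}, the two identities \eqref{def1b} and \eqref{lem1} are reformulations of the same fact: the statement that $\phi_\lambda$ is a conformal derivation in the second slot becomes, after flipping arguments, the ``derivation in the first slot'' form \eqref{lem1}. Since \eqref{def1a} is an involution (applying it twice restores the original expression), every manipulation below is reversible, so it suffices to derive \eqref{lem1} from \eqref{def1b}; the converse then follows by reading the same chain of equalities backwards.

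For the forward direction I would start from the left-hand side of \eqref{lem1} and first use the skew-symmetry \eqref{def1a} of $\phi_\lambda$ to move the bracket $[x_\mu y]$ out of the first slot:
\[
\phi_{\lambda+\mu}([x_\mu y],z)=-\phi_{-\partial-\lambda-\mu}(z,[x_\mu y]).
\]
Now the bracket sits in the second slot, so \eqref{def1b}---applied with first argument $z$ and with the outer spectral parameter kept as a formal variable to be specialized only at the end---expands the right-hand side into $-[(\phi_\bullet(z,x))_{\bullet+\mu}y]-[x_\mu\phi_\bullet(z,y)]$. I would then apply \eqref{def1a} a second time to each factor $\phi_\bullet(z,x)$ and $\phi_\bullet(z,y)$, flipping their arguments so that $z$ returns to the second slot, and finally use the skew-symmetry of the $\lambda$-bracket (axiom (2) in the definition of a Lie conformal algebra) to bring the two resulting brackets into the shape $[x_\mu\,\cdot\,]$ and $[y_\lambda\,\cdot\,]$. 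Collecting terms and specializing the formal parameter to $-\partial-\lambda-\mu$ should produce exactly $[x_\mu\phi_\lambda(y,z)]-[y_\lambda\phi_\mu(x,z)]$, which is \eqref{lem1}.

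The only genuine difficulty is the bookkeeping of the formal spectral variables under the substitutions $\alpha\mapsto-\partial-\alpha$. One must track carefully where the operator $\partial$ coming from \eqref{def1a} lands: when a factor $\phi_{-\partial-\alpha}(x,z)$ is inserted into the first slot of a $\lambda$-bracket, the conformal sesquilinearity (axiom (1)) of the bracket---together with the defining relation $f_\lambda(\partial x,y)=-\lambda f_\lambda(x,y)$ of a conformal bilinear map---converts that internal $\partial$ into a shift of the bracket's spectral parameter, and it is precisely this interaction that is responsible for the interchange of $\lambda$ and $\mu$ and for the minus sign appearing in \eqref{lem1}. I expect this variable-chasing---verifying that all substitutions are consistent and that the parameters align to give $\lambda$, $\mu$, and $\lambda+\mu$ in the claimed positions---to be the main obstacle, whereas the structural skeleton (flip, apply \eqref{def1b}, flip back, apply bracket skew-symmetry) is routine. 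Because each of these steps is an equivalence, running them in reverse establishes \eqref{def1b} from \eqref{lem1}, completing the proof of the equivalence.
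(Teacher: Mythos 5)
Your proposal is correct and follows essentially the same route as the paper: use the skew-symmetry \eqref{def1a} to move the bracket into the second slot of $\phi$, expand with the other identity, flip the $\phi$-factors back with \eqref{def1a}, and let conformal sesquilinearity together with the bracket skew-symmetry absorb the internal $\partial$'s into shifts of the spectral parameters. The only (inessential) difference is direction: you derive \eqref{lem1} from \eqref{def1b} and invoke reversibility, while the paper writes out \eqref{lem1}$\Rightarrow$\eqref{def1b} (via relabeling $z,x,y\mapsto x,y,z$ and the substitution $\lambda\mapsto-\partial-\lambda-\mu$) and declares the converse similar.
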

\begin{proof}
If Eq.\eqref{lem1} satisfies, by conformal sesquilinearity, we have
\begin{eqnarray*}
-\phi_{-\partial-\lambda-\mu}(z,[x_\mu y])=[x_\mu\phi_\lambda(y,z)]-[y_\lambda\phi_\mu(x,z)].
\end{eqnarray*}
Replace $z,x,y$ by $x,y,z$, we obtain
\begin{eqnarray*}
-\phi_{-\partial-\lambda-\mu}(x,[y_\mu z])=[y_\mu\phi_\lambda(z,x)]-[z_\lambda\phi_\mu(y,x)].
\end{eqnarray*}
By conformal sesquilinearity, we get
\begin{eqnarray*}
-\phi_{-\partial-\lambda-\mu}(x,[y_\mu z])=-[y_\mu\phi_{-\partial-\lambda}(x,z)]+
[z_\lambda\phi_{-\partial-\mu}(x,y)].
\end{eqnarray*}
Replace $\lambda, \mu$ by $-\partial-\lambda-\mu, \mu$ respectively and by conformal sesquilinearity,
\begin{eqnarray*}
\phi_{\lambda}(x,[y_\mu z])=[(\phi_\lambda(x,y))_{\lambda+\mu}z]+[y_\mu\phi_\lambda(x,z)]
\end{eqnarray*}
satisfies.  The reverse conclusion follows similarly.
\end{proof}

\bdefn\rm
Denote by $\BDer(\R)$ the set of all conformal biderivations of $\R$.
\edefn

\begin{lem}
If the map $\phi^t_\lambda:\R\times\R\rightarrow\R[\lambda]$, defined by $\phi^t_\lambda(x,y)=t[x_\lambda y]$ for all $x,y\in\R$, where $t\in\mathbb{C}$, then $\phi^t$ is a conformal biderivation of $\R$. We call this class conformal biderivations by {\it inner conformal biderivations}.
\end{lem}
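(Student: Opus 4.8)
The plan is to verify directly that $\phi^t$ meets every requirement in the definition of a conformal biderivation, each of which will turn out to be an immediate consequence of one of the three Lie conformal algebra axioms once the scalar $t$ is factored out. Since the $\lambda$-bracket is $\mathbb{C}$-bilinear and $t\in\mathbb{C}$ is a constant, $\phi^t_\lambda$ is automatically $\mathbb{C}$-bilinear, so the work reduces to checking that $\phi^t_\lambda$ is a conformal bilinear map and that it satisfies Eqs.\eqref{def1a} and \eqref{def1b}.

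First I would confirm conformal bilinearity. For the first slot, conformal sesquilinearity gives $\phi^t_\lambda(\partial x,y)=t[(\partial x)_\lambda y]=-\lambda\,t[x_\lambda y]=-\lambda\,\phi^t_\lambda(x,y)$, exactly as needed. For the second slot I must produce $\phi^t_\lambda(x,\partial y)=(\partial+\lambda)\phi^t_\lambda(x,y)$, i.e.\ the identity $[x_\lambda\partial y]=(\partial+\lambda)[x_\lambda y]$. This is the one point not read off a single axiom: I would obtain it by combining skew-symmetry with sesquilinearity, via the chain $[x_\lambda\partial y]=-[(\partial y)_{-\partial-\lambda}x]=-(\partial+\lambda)[y_{-\partial-\lambda}x]=(\partial+\lambda)[x_\lambda y]$, where the last equality again uses skew-symmetry.

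Next I would verify skew-symmetry: applying the skew-symmetry axiom of $\R$ gives $\phi^t_\lambda(x,y)=t[x_\lambda y]=-t[y_{-\partial-\lambda}x]=-\phi^t_{-\partial-\lambda}(y,x)$, which is Eq.\eqref{def1a}. Finally, for Eq.\eqref{def1b} I would substitute the definition into the right-hand side and pull out $t$, obtaining $[(\phi^t_\lambda(x,y))_{\lambda+\mu}z]+[y_\mu\phi^t_\lambda(x,z)]=t\bigl([[x_\lambda y]_{\lambda+\mu}z]+[y_\mu[x_\lambda z]]\bigr)$, which by the Jacobi identity of $\R$ equals $t[x_\lambda[y_\mu z]]=\phi^t_\lambda(x,[y_\mu z])$, the left-hand side. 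There is no genuinely hard step here, as the statement is a direct verification; the only place demanding a little care is the second-slot conformal linearity, where the identity $[x_\lambda\partial y]=(\partial+\lambda)[x_\lambda y]$ has to be derived from skew-symmetry and sesquilinearity rather than quoted from a single axiom.
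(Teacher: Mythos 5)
Your proof is correct and takes the same route the paper intends: the paper's own proof is just the one-line remark that the claim ``is straightforward by the definition of conformal biderivation,'' and your verification is exactly that straightforward check, carried out in full (conformal bilinearity via sesquilinearity, Eq.~\eqref{def1a} via skew-symmetry, Eq.~\eqref{def1b} via the Jacobi identity). Your careful derivation of the second-slot identity $[x_\lambda\partial y]=(\partial+\lambda)[x_\lambda y]$ from skew-symmetry and sesquilinearity is a worthwhile detail, since the paper's Definition 2.1 states only the first-slot half of sesquilinearity.
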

\begin{proof}
It is straightforward by the definition of conformal biderivation.
\end{proof}

\begin{lem}
Let $\phi_\lambda$ be a conformal biderivation of Lie conformal algebra $\R$. Then
\begin{eqnarray}
[(\phi_{\mu}(x, y))_{\mu+\gamma}[u_\lambda v]]=[[x_\mu y]_{\mu+\gamma}\phi_\lambda(u,v)]\label{lem2}
\end{eqnarray}
for any $x,y,u,v\in\R$.
\end{lem}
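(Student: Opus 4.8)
The plan is to isolate the target term on each side by one application of \eqref{def1b} (or its equivalent form \eqref{lem1}), and then reconcile the two resulting expressions using the Jacobi identity of $\R$ together with the skew-symmetry \eqref{def1a}.

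First I would produce the left-hand side. Applying \eqref{def1b} to $\phi_\mu(x,[y_\gamma[u_\lambda v]])$, reading the inner $\lambda$-bracket as $[y_\gamma(\cdot)]$ with $(\cdot)=[u_\lambda v]$, gives $\phi_\mu(x,[y_\gamma[u_\lambda v]])=[(\phi_\mu(x,y))_{\mu+\gamma}[u_\lambda v]]+[y_\gamma\phi_\mu(x,[u_\lambda v])]$, so that the wanted term is $[(\phi_\mu(x,y))_{\mu+\gamma}[u_\lambda v]]=\phi_\mu(x,[y_\gamma[u_\lambda v]])-[y_\gamma\phi_\mu(x,[u_\lambda v])]$. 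Symmetrically, I would treat the right-hand side: using skew-symmetry of the bracket of $\R$ to write $[[x_\mu y]_{\mu+\gamma}\phi_\lambda(u,v)]=-[(\phi_\lambda(u,v))_{-\partial-\mu-\gamma}[x_\mu y]]$, and then applying \eqref{def1b} to $\phi_\lambda(u,[v_\sigma[x_\mu y]])$ with $\sigma=-\partial-\mu-\gamma-\lambda$ (so that $\lambda+\sigma=-\partial-\mu-\gamma$), I can isolate $[[x_\mu y]_{\mu+\gamma}\phi_\lambda(u,v)]$ as a combination of $\phi_\lambda(u,[v_\sigma[x_\mu y]])$ and $[v_\sigma\phi_\lambda(u,[x_\mu y])]$. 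The claim is then the equality of these two isolated expressions.

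To match them I would expand the triple brackets $[y_\gamma[u_\lambda v]]$ and $[v_\sigma[x_\mu y]]$ by the Jacobi identity of $\R$, expand each resulting $\phi$ of a double bracket again by \eqref{def1b} (or \eqref{lem1}), and use \eqref{def1a} to rewrite every atom $\phi_\bullet(\cdot,\cdot)$ in one normalized orientation. After these substitutions both sides reduce to the same sum of terms of the shape $[\,\cdot_\bullet[\,\cdot_\bullet\phi_\bullet(\cdot,\cdot)]\,]$, and the verification is finished by comparison.

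The main obstacle is the conformal bookkeeping, not any conceptual device. A naive expansion of $\phi_\mu(x,-)$ purely as a conformal derivation only ever generates atoms paired with $x$, so the factor $\phi_\lambda(u,v)$ can emerge only once skew-symmetry \eqref{def1a} has been used to transfer the active argument, and each such use shifts the conformal parameter by $-\partial$. Tracking these $-\partial-\mu-\gamma$ and $-\partial-\gamma$ shifts carefully — and checking that the mixed atoms $\phi_\bullet(x,u),\phi_\bullet(y,v),\dots$ cancel under the Jacobi identity, exactly in the style of the relabelings used in the equivalence proof of \eqref{lem1} above — is the delicate part of the computation.
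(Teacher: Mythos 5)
Your two isolation steps are individually fine: \eqref{def1b} applied to $\phi_\mu(x,[y_\gamma[u_\lambda v]])$ does give $[(\phi_\mu(x,y))_{\mu+\gamma}[u_\lambda v]]=\phi_\mu(x,[y_\gamma[u_\lambda v]])-[y_\gamma\phi_\mu(x,[u_\lambda v])]$, and the analogous manoeuvre with $\sigma=-\partial-\mu-\gamma-\lambda$ isolates $[[x_\mu y]_{\mu+\gamma}\phi_\lambda(u,v)]$. The fatal gap is the last step, where you claim that after expanding with Jacobi, \eqref{def1b}/\eqref{lem1}, and normalizing orientations with \eqref{def1a}, ``both sides reduce to the same sum of terms.'' They cannot, and the obstruction is structural, not bookkeeping. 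Each of your tools preserves a bipartition invariant: \eqref{def1b} turns $\phi(a,[b_\nu c])$ into atoms $\phi(a,b),\phi(a,c)$; \eqref{lem1} turns $\phi([a_\nu b],c)$ into atoms $\phi(a,c),\phi(b,c)$; and \eqref{def1a} only reverses the two slots of an atom, never changing which pair of elements it couples. Hence every atom arising from expanding $\phi_\mu(x,[y_\gamma[u_\lambda v]])$ and $[y_\gamma\phi_\mu(x,[u_\lambda v])]$ couples $x$ with one of $y,u,v$, while every atom arising on your other side couples $u$ with one of $v,x,y$. The only shared type is $\phi_\bullet(x,u)\sim\phi_\bullet(u,x)$: the left side keeps genuine $\phi_\bullet(x,y)$- and $\phi_\bullet(x,v)$-terms (indeed the left side of \eqref{lem2} is itself a pure $\phi_\mu(x,y)$-term), the right side keeps $\phi_\bullet(u,v)$- and $\phi_\bullet(u,y)$-terms, and no amount of normalization can match them; in particular the atoms $\phi_\bullet(y,v)$ you expect to see cancel never occur on either side. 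Equation \eqref{lem2} is precisely a relation that crosses these atom families, so it is not a formal consequence of expansion-and-comparison: if you carry out your left-hand expansion and simplify with Jacobi, you simply recover $0=0$.

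What is missing is a device that produces a cross-family relation, namely a term in which \emph{both} slots of $\phi$ are brackets. The paper expands
\begin{equation*}
\phi_{\lambda+\mu}([x_\mu u],[y_\gamma v])
\end{equation*}
in two orders --- first slot by \eqref{lem1} then second slot by \eqref{def1b}, and vice versa --- and equates the results; after Jacobi this yields $\Phi_{\lambda,\mu,\gamma}(x,y;u,v)=\Phi_{\mu,\lambda,\gamma}(u,y;x,v)$, where $\Phi_{\lambda,\mu,\gamma}(x,y;u,v)=[(\phi_\mu(x,y))_{\mu+\gamma}[u_\lambda v]]-[[x_\mu y]_{\mu+\gamma}\phi_\lambda(u,v)]$ is exactly the defect you need to kill. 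Note that even this relation is not the conclusion: the paper then uses \eqref{def1a} together with skew-symmetry and sesquilinearity of the bracket to get the antisymmetry $\Phi_{\lambda,\mu,\gamma}(x,y;u,v)=-\Phi_{\lambda,\gamma,\mu}(y,x;u,v)$, and chains the two involutions in two different ways to conclude $\Phi=-\Phi$, hence $\Phi=0$. Both ingredients --- the doubly-bracketed argument of $\phi$ and the final symmetry-chasing --- are absent from your plan, and without them the computation you describe cannot close.
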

\begin{proof}
On the one hand, using Eq.\eqref{lem1}, we have
\begin{eqnarray*}
\phi_{\lambda+\mu}([x_\mu u],[y_\gamma v])&=&[x_\mu\phi_\lambda(u,[y_\gamma v])]-[u_\lambda\phi_\mu(x,[y_\gamma v])]\\
&=&([x_\mu[(\phi_\lambda(u,y))_{\lambda+\gamma}v]]+[x_\mu[y_\gamma\phi_\lambda(u,v)]])\\
&&-([u_\lambda[(\phi_\mu(x,y))_{\mu+\gamma}v]]+[u_\lambda[y_\gamma\phi_\mu(x,v)]]).
\end{eqnarray*}

On the other hand, using Eq.\eqref{def1b}, we have
\begin{eqnarray*}
\phi_{\lambda+\mu}([x_\mu u],[y_\gamma v])
&=&[(\phi_{\lambda+\mu}([x_\mu u],y))_{\lambda+\mu+\gamma}v]+[y_\gamma\phi_{\lambda+\mu}([x_\mu u],v)]\\
&=&([[x_\mu\phi_\lambda(u,y)]_{\lambda+\mu+\gamma}v]
-[[u_\lambda\phi_\mu(x,y)]_{\lambda+\mu+\gamma}v])\\
&&+([y_\gamma[x_\mu\phi_\lambda(u,v)]]
-[y_\gamma[u_\lambda\phi_\mu(x,v)]]).
\end{eqnarray*}
Comparing two sides of the above equations, and using the Jacobi identity of Lie conformal algebra, we obtain
\begin{eqnarray*}
[(\phi_\lambda(u,y))_{\lambda+\gamma}[x_\mu v]]+[[x_\mu y]_{\mu+\gamma}\phi_\lambda(u,v)]
=[(\phi_\mu(x,y))_{\mu+\gamma}[u_\lambda v]]+[[u_\lambda y]_{\lambda+\gamma}\phi_\mu(x,v)],
\end{eqnarray*}
it is equivalent to
\begin{eqnarray*}
[(\phi_\mu(x,y))_{\mu+\gamma}[u_\lambda v]]-[[x_\mu y]_{\mu+\gamma}\phi_\lambda(u,v)]
=[(\phi_\lambda(u,y))_{\lambda+\gamma}[x_\mu v]]-[[u_\lambda y]_{\lambda+\gamma}\phi_\mu(x,v)].
\end{eqnarray*}

Now let $\Phi_{\lambda,\mu,\gamma}(x,y;u,v)=[(\phi_\mu(x,y))_{\mu+\gamma}[u_\lambda v]]-[[x_\mu y]_{\mu+\gamma}\phi_\lambda(u,v)]$.
From the above equation, it follows at once that
\begin{eqnarray*}
\Phi_{\lambda,\mu,\gamma}(x,y;u,v)=\Phi_{\mu,\lambda,\gamma}(u,y;x,v).
\end{eqnarray*}
Obviously, by skew-symmetry and conformal sesquilinearity , we get
\begin{eqnarray*}
&&\Phi_{\lambda,\mu,\gamma}(x,y;u,v)\\
&=&[(\phi_\mu(x,y))_{\mu+\gamma}[u_\lambda v]]-[[x_\mu y]_{\mu+\gamma}\phi_\lambda(u,v)]\\
&=&-[(\phi_{-\partial-\mu}(y,x))_{\mu+\gamma}[u_\lambda v]]+[[y_{-\partial-\mu}x]_{\mu+\gamma}\phi_\lambda(u,v)]\\
&=&-[(\phi_{\gamma}(y,x))_{\mu+\gamma}[u_\lambda v]]+[[y_{\gamma}x]_{\mu+\gamma}\phi_\lambda(u,v)]\\
&=&-\Phi_{\lambda,\gamma,\mu}(y,x;u,v).
\end{eqnarray*}

For one thing, we have
\begin{eqnarray*}
\Phi_{\lambda,\mu,\gamma}(x,y;u,v)=-\Phi_{\lambda,\gamma,\mu}(y,x;u,v)
=-\Phi_{\gamma,\lambda,\mu}(u,x;y,v)
=\Phi_{\gamma,\mu,\lambda}(x,u;y,v).
\end{eqnarray*}

For another, we also have
\begin{eqnarray*}
\Phi_{\lambda,\mu,\gamma}(x,y;u,v)
&=&\Phi_{\mu,\lambda,\gamma}(u,y;x,v)
=-\Phi_{\mu,\gamma,\lambda}(y,u;x,v)\\
&=&-\Phi_{\gamma,\mu,\lambda}(x,u;y,v)
=-\Phi_{\gamma,\mu,\lambda}(x,u;y,v).
\end{eqnarray*}
Then
\begin{eqnarray*}
\Phi_{\lambda,\mu,\gamma}(x,y;u,v)=0,
\end{eqnarray*}
that is, $[(\phi_{\mu}(x, y))_{\mu+\gamma}[u_\lambda v])=[[x_\mu y]_{\mu+\gamma}\phi_\lambda(u,v)].$
\end{proof}

\begin{re}\rm \label{re2.12}
Let $\phi_\lambda$ be a conformal biderivation of Lie conformal algebra $\R$. If $[x_\lambda y]=0,$ then $\phi_\lambda(x,y)\in\Z[\R_\lambda\R]$.
\end{re}

\section{Conformal biderivations of Lie conformal algebras $\rm CW$, $\rm Vir$ and $\rm CLW(a,b)$}

\begin{thm}\label{thm1}
Every conformal biderivation $\phi_\lambda$ on the loop Virasoro Lie conformal algebra $CW$ has the following forms:
\begin{eqnarray*}
\phi_\lambda(L_i,L_j)=(\partial+2\lambda)\sum_{k\in\mathbb{Z}}a_{k-i-j}L_k,
\end{eqnarray*}
for some complex numbers set $\{a_k,k\in\mathbb{Z}\}$.
\end{thm}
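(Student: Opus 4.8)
The plan is to derive everything from the key identity \eqref{lem2}, which every conformal biderivation satisfies. Since $\phi_\lambda$ takes values in $CW[\lambda]$ and $CW=\bigoplus_{k}\mathbb{C}[\partial]L_k$ is a direct sum, I can write $\phi_\lambda(L_i,L_j)=\sum_{k}p^{ij}_k(\partial,\lambda)L_k$ with each $p^{ij}_k\in\mathbb{C}[\partial,\lambda]$ and only finitely many nonzero for fixed $i,j$. The theorem then amounts to pinning down these polynomials.

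First I would specialize \eqref{lem2} to $x=L_i,\ y=L_j,\ u=L_s,\ v=L_t$ and expand both sides using the $\lambda$-brackets of $CW$ together with conformal sesquilinearity, in the forms $[(\partial a)_\nu b]=-\nu[a_\nu b]$ and $[a_\nu\partial b]=(\partial+\nu)[a_\nu b]$. On the left, $[u_\lambda v]=(\partial+2\lambda)L_{s+t}$ gets bracketed with $\phi_\mu(L_i,L_j)$; on the right, $[x_\mu y]=(\partial+2\mu)L_{i+j}$ gets bracketed with $\phi_\lambda(L_s,L_t)$. Writing $\nu=\mu+\gamma$, cancelling the common nonzero factor $(\partial+2\nu)$, and comparing the coefficient of each $L_k$, this collapses to the single polynomial identity
\[
(\partial+\nu+2\lambda)\,p^{ij}_{k-s-t}(-\nu,\mu)=(2\mu-\nu)\,p^{st}_{k-i-j}(\partial+\nu,\lambda),
\]
valid for all $i,j,s,t,k$ and as an identity in the variables $\partial,\lambda,\mu,\gamma$. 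Getting this master equation right — in particular tracking where $\partial$, $\lambda$ and the bracket variable $\mu+\gamma$ land after each use of sesquilinearity — is the step I expect to be the main obstacle; everything afterward is bookkeeping.

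From the master equation the structure falls out, treating the relations as polynomial identities so that I may restrict freely to the dense set where $2\mu-\nu\neq0$. The left-hand side is affine in $\partial$, which forces $\deg_\partial p^{st}_m\le1$ for every index $m$; writing $p^{st}_m(\partial,\lambda)=\alpha^{st}_m(\lambda)\partial+\beta^{st}_m(\lambda)$ and comparing the coefficients of $\partial^1$ and $\partial^0$, then cancelling $2\mu-\nu$, yields $\beta^{st}_m=2\lambda\,\alpha^{st}_m$, i.e. $p^{st}_m(\partial,\lambda)=\alpha^{st}_m(\lambda)(\partial+2\lambda)$. Feeding this back into the identity and cancelling $2\mu-\nu$ once more gives $\alpha^{ij}_{k-s-t}(\mu)=\alpha^{st}_{k-i-j}(\lambda)$; as the two sides depend on the independent variables $\mu$ and $\lambda$, each $\alpha^{ij}_m$ must be a constant $a^{ij}_m\in\mathbb{C}$ subject to $a^{ij}_{k-s-t}=a^{st}_{k-i-j}$.

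Finally, setting $s=t=0$ collapses the last relation to $a^{ij}_k=a_{k-i-j}$ with $a_n:=a^{00}_n$, and this is readily seen to be consistent with the general relation. Hence $\phi_\lambda(L_i,L_j)=\sum_{k}a_{k-i-j}(\partial+2\lambda)L_k=(\partial+2\lambda)\sum_{k}a_{k-i-j}L_k$ with $\{a_k\}$ of finite support, which is exactly the asserted form. As a consistency check I would note that this expression is automatically skew-symmetric: substituting $\lambda\mapsto-\partial-\lambda$ sends $(\partial+2\lambda)$ to $-(\partial+2\lambda)$, cancelling the sign in \eqref{def1a}, so \eqref{lem2} imposes no extra constraint on the $a_k$, and conversely a direct verification shows every such $\phi_\lambda$ is a genuine conformal biderivation.
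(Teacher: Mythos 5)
Your proof is correct and takes essentially the same approach as the paper's: both rest on Eq.~\eqref{lem2} applied to basis elements, expand $\phi_\lambda(L_i,L_j)$ in the $\mathbb{C}[\partial]$-basis, bound the degree in $\partial$, and compare coefficients to force $p^{ij}_k(\partial,\lambda)=a_{k-i-j}(\partial+2\lambda)$. The only difference is organizational: the paper first specializes \eqref{lem2} to $u=L_i$, $v=L_j$ to pin down the polynomial form and then uses general $u=L_m$, $v=L_n$ for the index relation $f^k_{i,j}=f^{k+m+n-i-j}_{m,n}$, whereas you extract both conclusions from a single master equation with general $s,t$.
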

\begin{proof}
Suppose that $\phi_\lambda(L_i,L_j)=\sum_{k\in\mathbb{Z}}f^k_{i,j}(\partial,\lambda)L_k$ is a conformal biderivation of the loop Virasoro Lie conformal algebra $\rm CW$.

Take $x=L_i, y=L_j, u=L_i, v=L_j$ in Eq.\eqref{lem2}, we get
\begin{eqnarray*}
[(\phi_{\mu}(L_i,L_j))_{\mu+\gamma}[{L_i}_\lambda L_j]]=[[{L_i}_\mu L_j]_{\mu+\gamma}\phi_\lambda(L_i,L_j)],
\end{eqnarray*}
that is
\begin{eqnarray*}
(\partial+\mu+\gamma+2\lambda)\sum_{k\in\mathbb{Z}}f^k_{i,j}(-\mu-\gamma,\mu)L_{i+j+k}
=(\mu-\gamma)\sum_{k\in\mathbb{Z}}f^k_{i,j}(\partial+\mu+\gamma,\lambda)L_{i+j+k},
\end{eqnarray*}
obviously, we have
\begin{eqnarray*}
(\partial+\mu+\gamma+2\lambda)f^k_{i,j}(-\mu-\gamma,\mu)
=(\mu-\gamma)f^k_{i,j}(\partial+\mu+\gamma,\lambda),
\end{eqnarray*}
for any $i,j,k\in\mathbb{Z}$.
Fix $i,j,k\in\mathbb{Z}$, considering the power of $\partial$, we have $f^k_{i,j}(\partial,\lambda)=a_0(\lambda)+a_1(\lambda)\partial$, substituting into the above equation, we get
\begin{eqnarray*}
(\partial+\mu+\gamma+2\lambda)(a_0(\mu)+a_1(\mu)(-\mu-\gamma))
=(\mu-\gamma)(a_0(\lambda)+a_1(\lambda)(\partial+\mu+\gamma)).
\end{eqnarray*}
Considering the coefficients of $\partial$, we have
\begin{eqnarray*}
a_0(\mu)+a_1(\mu)(-\mu-\gamma)=a_1(\lambda)(\mu-\gamma),
\end{eqnarray*}
it is easy to get $a_1(\lambda)=a_1\in\mathbb{C}, a_0(\lambda)=2\lambda a_1$, so $f^k_{i,j}(\partial,\lambda)=a_1(\partial+2\lambda).$ So $\phi_\lambda(L_i,L_j)=(\partial+2\lambda)\sum_{k\in\mathbb{Z}}f^k_{i,j}L_k$, where $f^k_{i,j}\in\mathbb{C}$.

Take $x=L_i, y=L_j, u=L_k, v=L_l$ in Eq.\eqref{lem2} again, we can get
$f^k_{i,j}=f^{k+m+n-i-j}_{m,n},$ especially, $f^k_{0,0}=f^{k+m+n}_{m,n}$.
Set $f^k_{0,0}:=a_k$, i.e. $\phi_\mu(L_0,L_0)=(\partial+2\lambda)\sum_{k\in\mathbb{Z}}a_kL_k$. Then
\begin{eqnarray}
&&\phi_\lambda(L_i,L_j)=(\partial+2\lambda)\sum_{k\in\mathbb{Z}}a_{k-i-j}L_k.\label{lem3a}
\end{eqnarray}

Conversely, it is easy to prove that any skew-symmetric conformal bilinear map satisfying Eq.\eqref{lem3a} is a conformal biderivation.
\end{proof}

\begin{cor}
Every conformal biderivation on the Virasoro Lie conformal algebra is an inner conformal biderivation.
\end{cor}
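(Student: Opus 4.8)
The plan is to specialize the argument of Theorem \ref{thm1} to the single-generator case. The Virasoro Lie conformal algebra is the free $\mathbb{C}[\partial]$-module $\mathrm{Vir}=\mathbb{C}[\partial]L$ with $[L_\lambda L]=(\partial+2\lambda)L$; up to the identification of $L$ with $L_0$, this is precisely the loop Virasoro algebra $\mathrm{CW}$ carrying the single index $0$. A conformal biderivation $\phi_\lambda$ on $\mathrm{Vir}$ is therefore determined by its value on the pair $(L,L)$, and since the image must lie in $\mathrm{Vir}[\lambda]=\mathbb{C}[\partial,\lambda]L$, one may write $\phi_\lambda(L,L)=f(\partial,\lambda)L$ for a single polynomial $f$.

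First I would apply Eq.\eqref{lem2} with $x=y=u=v=L$. Because the $\lambda$-bracket on $\mathrm{Vir}$ coincides with that of $\mathrm{CW}$ at index $0$, the computation performed in the proof of Theorem \ref{thm1} applies verbatim and yields the functional equation
\begin{eqnarray*}
(\partial+\mu+\gamma+2\lambda)f(-\mu-\gamma,\mu)=(\mu-\gamma)f(\partial+\mu+\gamma,\lambda).
\end{eqnarray*}
Next, comparing the powers of $\partial$ forces $f$ to be affine in $\partial$, say $f(\partial,\lambda)=a_0(\lambda)+a_1(\lambda)\partial$; substituting this back and matching the coefficient of $\partial$ gives $a_1(\lambda)=a_1\in\mathbb{C}$ and $a_0(\lambda)=2\lambda a_1$, exactly as in Theorem \ref{thm1}. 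Hence $f(\partial,\lambda)=a_1(\partial+2\lambda)$ and consequently
\begin{eqnarray*}
\phi_\lambda(L,L)=a_1(\partial+2\lambda)L=a_1[L_\lambda L].
\end{eqnarray*}
By the lemma characterizing inner conformal biderivations, the map $(x,y)\mapsto a_1[x_\lambda y]$ is exactly the inner conformal biderivation $\phi^{a_1}$, so $\phi=\phi^{a_1}$ is inner, which completes the argument.

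I expect no genuine obstacle here, since the content is entirely inherited from Theorem \ref{thm1}; the point worth emphasizing is the structural contrast with the loop case. In $\mathrm{CW}$ the extra indices produce an infinite family of constants $\{a_k\}$, and the index-shifting second step of the proof of Theorem \ref{thm1} only relates these constants rather than annihilating them, so that a generic loop biderivation is \emph{not} inner. For $\mathrm{Vir}$ there is a single generator, so only the first step of that argument is relevant: the structure-constant family collapses to the single scalar $a_1$, and $\phi$ is automatically a scalar multiple of the bracket, hence inner. The only care needed is to observe that the codomain constraint $\phi_\lambda(L,L)\in\mathbb{C}[\partial,\lambda]L$ is what removes all freedom beyond this one scalar.
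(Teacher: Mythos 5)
Your proposal is correct and follows essentially the same route as the paper, whose proof of this corollary is simply an appeal to the proof of Theorem \ref{thm1}: you specialize that argument to the single generator $L$, obtain $\phi_\lambda(L,L)=a_1(\partial+2\lambda)L=a_1[L_\lambda L]$, and conclude innerness via conformal bilinearity. The only difference is that you write out explicitly the details the paper leaves implicit, including the useful observation of why the loop case fails to be inner while the rank-one case collapses to a single scalar.
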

\begin{proof}
By the proof of Theorem \ref{thm1}, we can get the conclusion immediately.
\end{proof}

\begin{thm}
Let $\R$ be the Lie conformal algebra $\rm CLW(a,b)$. Then a skew-symmetric conformal bilinear map $\phi_\lambda$ of $\R$ is a conformal biderivation if and only if $\phi_\lambda$ satisfies the following conditions:
\begin{eqnarray*}
&&\phi_\lambda(L_i,L_j)=(\partial+2\lambda)(\sum_{k\in\mathbb{Z}}a_{k-i-j}L_k
+\delta_{b+1,0}\sum_{k\in\mathbb{Z}}b_{k-i-j}G_k),\\
&&\phi_\lambda(G_i,G_j)=0,\,\,\,
\phi_\lambda(L_i,G_j)=(\partial+(1-b)\lambda)\sum_{k\in\mathbb{Z}}a_{k-i-j}G_k,
\end{eqnarray*}
for some two complex numbers sets $\{a_k, k\in\mathbb{Z}\}$ and $\{b_k, k\in\mathbb{Z}\}$.
\end{thm}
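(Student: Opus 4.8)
The plan is to prove both implications, with essentially all the work in the forward direction. The converse (\emph{if}) direction is a direct verification: given the explicit formulas, one substitutes basis elements into the skew-symmetry condition \eqref{def1a} and the biderivation identity \eqref{def1b} and checks equality using the $\lambda$-brackets of $\mathrm{CLW}(a,b)$ together with conformal sesquilinearity; I expect this to be routine bookkeeping, so I will only sketch it. For the forward (\emph{only if}) direction I assume $\phi_\lambda$ is an arbitrary conformal biderivation and pin down its values on generators. Since $\phi_\lambda$ is conformal bilinear, its interaction with $\partial$ is fixed, so it is determined by polynomials in $\partial,\lambda$: I would write $\phi_\lambda(L_i,L_j)=\sum_k f^k_{i,j}(\partial,\lambda)L_k+\sum_k g^k_{i,j}(\partial,\lambda)G_k$, and analogously expand $\phi_\lambda(L_i,G_j)$, $\phi_\lambda(G_i,L_j)$ and $\phi_\lambda(G_i,G_j)$, then solve for the coefficient polynomials.

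First I would dispose of $\phi_\lambda(G_i,G_j)$. Since $[{G_i}_\lambda G_j]=0$, Remark~\ref{re2.12} places $\phi_\lambda(G_i,G_j)$ in $\Z[\R_\lambda\R]$, the centralizer of the derived algebra. But any $z\in\R$ with $[z_\mu L_0]=0$ must vanish, because bracketing a general $z=\sum_m p_m(\partial)L_m+\sum_m q_m(\partial)G_m$ against $L_0$ produces $\sum_m p_m(-\mu)(\partial+2\mu)L_m-\sum_m q_m(-\mu)(b\partial+(b-1)\mu)G_m$, whose vanishing forces all $p_m,q_m=0$; hence $\phi_\lambda(G_i,G_j)=0$. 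Next I would determine $\phi_\lambda(L_i,L_j)$ by feeding $x=u=L_i$, $y=v=L_j$ into Eq.~\eqref{lem2}, exactly as in the proof of Theorem~\ref{thm1}. Splitting the resulting identity into its $L$-component and its $G$-component (independent because the $L_k,G_k$ are $\C[\partial]$-independent), the $L$-component reproduces the equation solved in Theorem~\ref{thm1} and yields $f^k_{i,j}(\partial,\lambda)=a_{k-i-j}(\partial+2\lambda)$ after the index-shift argument.

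The heart of the proof is the $G$-component of that same identity, which (writing $\nu=\mu+\gamma$) reads
\[
-g^k_{i,j}(-\nu,\mu)(\partial+\nu+2\lambda)\bigl(b\partial+(b-1)\nu\bigr)=(2\mu-\nu)\,g^k_{i,j}(\partial+\nu,\lambda)\bigl(\partial+(1-b)\nu\bigr).
\]
A degree count in $\partial$ shows $g^k_{i,j}$ is at most linear, say $g^k_{i,j}(\partial,\lambda)=\alpha(\lambda)+\beta(\lambda)\partial$; matching the top coefficient in $\partial$ forces $\beta$ to be a constant and then matching the coefficients of $\nu$ gives $(b+1)\beta=0$. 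Thus either $\beta=0$, which drags $\alpha$ to zero and kills the $G$-component, or $b=-1$, in which case the same matching gives $g^k_{i,j}(\partial,\lambda)=\beta(\partial+2\lambda)$ with $\beta=b_{k-i-j}$. This produces precisely the Kronecker factor $\delta_{b+1,0}$ in the statement. I expect this step to be the main obstacle: it is where the special value $b=-1$ must be \emph{extracted} from a polynomial identity rather than guessed, and where one must verify that the candidate solution satisfies the full identity, not merely its leading term.

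Finally I would recover $\phi_\lambda(L_i,G_j)$. Feeding $u=L_i$, $v=G_j$ against a suitable $L$-pair into Eq.~\eqref{lem2} and using $[{G_k}_\nu G_{i+j}]=0$, only the $L$-part of $\phi_\mu(L_i,L_j)$ survives on one side, producing $G$-terms only; comparison then forces the $L$-component of $\phi_\lambda(L_i,G_j)$ to vanish and ties its $G$-component to the \emph{same} sequence $\{a_k\}$, giving $\phi_\lambda(L_i,G_j)=(\partial+(1-b)\lambda)\sum_k a_{k-i-j}G_k$; alternatively this can be read off from \eqref{def1b} applied to $\phi_\lambda(L_i,[{L_0}_\mu G_j])$. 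The value of $\phi_\lambda(G_i,L_j)$ is then forced by skew-symmetry \eqref{def1a}. Throughout, the index-shift argument from Theorem~\ref{thm1}, comparing $\phi$ on different index pairs via Eq.~\eqref{lem2}, is what collapses the doubly-indexed families $f^k_{i,j}$, $g^k_{i,j}$ and the $G$-coefficients of $\phi_\lambda(L_i,G_j)$ into the single sequences $a_k$ and $b_k$ depending only on $k-i-j$.
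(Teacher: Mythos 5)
Your proposal is correct and follows essentially the same route as the paper's proof: both rest on Eq.~\eqref{lem2} evaluated on the generator quadruples $(L_i,L_j;L_i,L_j)$, $(L_i,L_j;L_i,G_j)$ and $(L_i,L_j;L_k,L_l)$, with the case split $b=-1$ versus $b\neq-1$ extracted from a polynomial coefficient comparison and the index-shift argument collapsing $f^k_{i,j},g^k_{i,j},d^k_{i,j}$ to single sequences. The only differences are cosmetic --- you run the $(L,L;L,L)$ substitution before the mixed one and match the $\partial^2$ coefficient where the paper matches $\gamma^3$, and you supply the explicit computation that the center of $\mathrm{CLW}(a,b)$ is trivial, a detail the paper dismisses as obvious after Remark~\ref{re2.12}.
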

\begin{proof}
Let $\phi_\lambda$ be a conformal biderivation of Lie conformal algebra $\R$. By Remark \eqref{re2.12}, $\phi_\lambda(G_i,G_j)=0$ obviously. Suppose that
\begin{eqnarray*}
&&\phi_\lambda(L_i,L_j)=\sum_{k\in\mathbb{Z}}f^k_{i,j}(\partial,\lambda)L_k
+\sum_{k\in\mathbb{Z}}g^k_{i,j}(\partial,\lambda)G_k,\\
&&\phi_\lambda(L_i,G_j)=\sum_{k\in\mathbb{Z}}s^k_{i,j}(\partial,\lambda)L_k
+\sum_{k\in\mathbb{Z}}d^k_{i,j}(\partial,\lambda)G_k.
\end{eqnarray*}

Firstly, take $x=L_i, y=L_j, u=L_i, v=G_j$ in Eq.\eqref{lem2}, we get
\begin{eqnarray*}
&&[(\phi_{\mu}(L_i, L_j))_{\mu+\gamma}[{L_i}_\lambda G_j]]=[[{L_i}_\mu L_j]_{\mu+\gamma}\phi_\lambda(L_i,G_j)]\\
\Longleftrightarrow&&[(\sum_{k\in\mathbb{Z}}f^k_{i,j}(\partial,\mu)L_k
+\sum_{k\in\mathbb{Z}}g^k_{i,j}(\partial,\mu)G_k)_{\mu+\gamma}(\partial+(1-b)\lambda)G_{i+j}]\\&&=
[((\partial+2\mu){L_{i+j}})_{\mu+\gamma}(\sum_{k\in\mathbb{Z}}s^k_{i,j}(\partial,\lambda)L_k
+\sum_{k\in\mathbb{Z}}d^k_{i,j}(\partial,\lambda)G_k)]\\
\Longleftrightarrow&&(\partial+\mu+\gamma+(1-b)\lambda)(\partial+(1-b)(\mu+\gamma))
\sum_{k\in\mathbb{Z}}f^k_{i,j}(-\mu-\gamma,\mu)G_{i+j+k}\\&&=
(\mu-\gamma)(\partial+2\mu+2\gamma)\sum_{k\in\mathbb{Z}}s^k_{i,j}(\partial+\mu+\gamma,\lambda)L_{i+j+k}\\
&&+(\mu-\gamma)(\partial+(1-b)(\mu+\gamma))\sum_{k\in\mathbb{Z}}d^k_{i,j}(\partial+\mu+\gamma,\lambda)G_{i+j+k},
\end{eqnarray*}
obviously, we have $s^k_{i,j}(\partial,\lambda)=0$ and
\begin{eqnarray*}
&&(\partial+\mu+\gamma+(1-b)\lambda)f^k_{i,j}(-\mu-\gamma,\mu)=(\mu-\gamma)d^k_{i,j}(\partial+\mu+\gamma,\lambda).
\end{eqnarray*}
for any $i,j,k\in\mathbb{Z}$. Considering the coefficients of $\partial$, we have $d^k_{i,j}(\partial,\lambda)=a_0(\lambda)+a_1(\lambda)\partial$, substituting into the above equation, we get
\begin{eqnarray*}
&&(\partial+\mu+\gamma+(1-b)\lambda)f^k_{i,j}(-\mu-\gamma,\mu)
=(\mu-\gamma)(a_0(\lambda)+a_1(\lambda)(\partial+\mu+\gamma)).
\end{eqnarray*}
Considering the coefficients of $\gamma$ again, we get $f^k_{i,j}(\partial,\lambda)=b_0(\lambda)+b_1(\lambda)\partial$, substituting into the above equation, we get
\begin{eqnarray*}
&&(\partial+\mu+\gamma+(1-b)\lambda)(b_0(\mu)+b_1(\mu)(-\mu-\gamma))
=(\mu-\gamma)(a_0(\lambda)+a_1(\lambda)(\partial+\mu+\gamma)),
\end{eqnarray*}
observing the coefficients of $\gamma^2$, we have $b_1(\mu)=a_1(\lambda)=a_1\in\mathbb{C}$, substituting into the above equation, we get
\begin{eqnarray*}
b_0(\mu)=2 a_1\mu, a_0(\mu)=a_1(1-b)\mu.
\end{eqnarray*}
So we can set
\begin{eqnarray*}
&&\phi_\lambda(L_i,L_j)=(\partial+2\lambda)\sum_{k\in\mathbb{Z}}d^k_{i,j}L_k
+\sum_{k\in\mathbb{Z}}g^k_{i,j}(\partial,\lambda)G_k,\\
&&\phi_\lambda(G_i,G_j)=0,\\
&&\phi_\lambda(L_i,G_j)=(\partial+(1-b)\lambda)\sum_{k\in\mathbb{Z}}d^k_{i,j}G_k,
\end{eqnarray*}
where $d^k_{i,j}\in\mathbb{C}$.

Secondly, take $x=L_i, y=L_j, u=L_i, v=L_j$ in Eq.\eqref{lem2}, we get
\begin{eqnarray*}
&&[(\phi_{\mu}(L_i, L_j))_{\mu+\gamma}[{L_i}_\lambda L_j]]=[[{L_i}_\mu L_j]_{\mu+\gamma}\phi_\lambda(L_i,L_j)]\\
\Longleftrightarrow&&[((\partial+2\mu)\sum_{k\in\mathbb{Z}}d^k_{i,j}L_k
+\sum_{k\in\mathbb{Z}}g^k_{i,j}(\partial,\mu)G_k)_{\mu+\gamma}(\partial+2\lambda)L_{i+j}]\\&&=
[((\partial+2\mu){L_{i+j}})_{\mu+\gamma}((\partial+2\lambda)\sum_{k\in\mathbb{Z}}d^k_{i,j}L_k
+\sum_{k\in\mathbb{Z}}g^k_{i,j}(\partial,\lambda)G_k)]\\
\Longleftrightarrow&&(\mu-\gamma)(\partial+\mu+\gamma+2\lambda)(\partial+2\mu+2\gamma)
\sum_{k\in\mathbb{Z}}d^k_{i,j}L_{i+j+k}\\&&
-(\partial+\mu+\gamma+2\lambda)(b\partial+(b-1)(\mu+\gamma))\sum_{k\in\mathbb{Z}}g^k_{i,j}(-\mu-\gamma,\mu)G_{i+j+k}
\\&&=(\mu-\gamma)(\partial+\mu+\gamma+2\lambda)(\partial+2\mu+2\gamma)
\sum_{k\in\mathbb{Z}}d^k_{i,j}L_{i+j+k}\\&&
+(\mu-\gamma)(\partial+(1-b)(\mu+\gamma))\sum_{k\in\mathbb{Z}}g^k_{i,j}(\partial+\mu+\gamma,\lambda)G_{i+j+k}\\
\Longleftrightarrow&&-(\partial+\mu+\gamma+2\lambda)(b\partial+(b-1)(\mu+\gamma))\sum_{k\in\mathbb{Z}}
g^k_{i,j}(-\mu-\gamma,\mu)G_{i+j+k},
\\&&=(\mu-\gamma)(\partial+(1-b)(\mu+\gamma))\sum_{k\in\mathbb{Z}}g^k_{i,j}(\partial+\mu+\gamma,\lambda)G_{i+j+k},
\end{eqnarray*}
it is easy to see that
\begin{eqnarray*}
&&-(\partial+\mu+\gamma+2\lambda)(b\partial+(b-1)(\mu+\gamma))g^k_{i,j}(-\mu-\gamma,\mu)
\\&&=(\mu-\gamma)(\partial+(1-b)(\mu+\gamma))g^k_{i,j}(\partial+\mu+\gamma,\lambda),
\end{eqnarray*}
for any $i,j,k\in\mathbb{Z}$. Considering the power of $\partial$, we have $g^k_{i,j}(\partial,\lambda)=c_0(\lambda)+c_1(\lambda)\partial$, substituting into the above equation, we get
\begin{eqnarray}
&&-(\partial+\mu+\gamma+2\lambda)(b\partial+(b-1)(\mu+\gamma))(c_0(\mu)+c_1(\mu)(-\mu-\gamma))\nonumber\\
&&=(\mu-\gamma)(\partial+(1-b)(\mu+\gamma))(c_0(\lambda)+c_1(\lambda)(\partial+\mu+\gamma))\label{lem3.1a}.
\end{eqnarray}
Considering the coefficients of $\gamma^3$, we have
\begin{eqnarray}
(b-1)(c_1(\mu)-c_1(\lambda))=0\label{lem3.1b}.
\end{eqnarray}
$(i)$ If $b=1$, substituting in Eq.\eqref{lem3.1a}, we get
\begin{eqnarray*}
-(\partial+\mu+\gamma+2\lambda)(c_0(\mu)+c_1(\mu)(-\mu-\gamma))
=(\mu-\gamma)(c_0(\lambda)+c_1(\lambda)(\partial+\mu+\gamma)),
\end{eqnarray*}
observing the coefficients of $\partial$ in the above equation, we have $c_0(\lambda)=c_1(\lambda)=0$.\\
$(ii)$ If $b\neq1$, by Eq.\eqref{lem3.1b}, we have $c_1(\lambda)=c_1\in\mathbb{C}$. Substituting in Eq.\eqref{lem3.1a}, we get
\begin{eqnarray*}
&&-(\partial+\mu+\gamma+2\lambda)(b\partial+(b-1)(\mu+\gamma))(c_0(\mu)+c_1(-\mu-\gamma))\nonumber\\
&&=(\mu-\gamma)(\partial+(1-b)(\mu+\gamma))(c_0(\lambda)+c_1(\partial+\mu+\gamma))\label{lem3.1},
\end{eqnarray*}
let's just think about the monomials only containing $\mu$, that is
\begin{eqnarray*}
(b-1)(c_0(\mu)-c_1\mu)=(b-1)c_1\mu,
\end{eqnarray*}
so we obtain $c_0(\mu)=2c_1\mu$. Substituting in Eq.\eqref{lem3.1a} again, we get
\begin{eqnarray*}
-c_1(b\partial+(b-1)(\mu+\gamma))=c_1(\partial+(1-b)(\mu+\gamma)),
\end{eqnarray*}
that is $c_1(b+1)=0$. Especially, if $b\neq-1$, then $c_1=0$.

From what has been discussed above, if\\
$(1)$ $b\neq-1$, we have
\begin{eqnarray*}
&&\phi_\lambda(L_i,L_j)=(\partial+2\lambda)\sum_{k\in\mathbb{Z}}d^k_{i,j}L_k,\\
&&\phi_\lambda(G_i,G_j)=0,\\
&&\phi_\lambda(L_i,G_j)=(\partial+(1-b)\lambda)\sum_{k\in\mathbb{Z}}d^k_{i,j}G_k;
\end{eqnarray*}
$(2)$ $b=-1$, we have
\begin{eqnarray*}
&&\phi_\lambda(L_i,L_j)=(\partial+2\lambda)(\sum_{k\in\mathbb{Z}}d^k_{i,j}L_k
+\sum_{k\in\mathbb{Z}}g^k_{i,j}G_k),\\
&&\phi_\lambda(G_i,G_j)=0,\\
&&\phi_\lambda(L_i,G_j)=(\partial+(1-b)\lambda)\sum_{k\in\mathbb{Z}}d^k_{i,j}G_k.
\end{eqnarray*}

Finally, take $x=L_i, y=L_j, u=L_k, v=L_l$ in Eq.\eqref{lem2}, we can get
$d^k_{i,j}=d^{k+m+n-i-j}_{m,n}$ and $g^k_{i,j}=g^{k+m+n-i-j}_{m,n}$. Especially, $d^k_{0,0}=d^{k+m+n}_{m,n}$ and $g^k_{0,0}=g^{k+m+n}_{m,n}$. Set $d^k_{0,0}:=a_k, g^k_{0,0}:=b_k$, i.e. $\phi_\lambda(L_0,L_0)=(\partial+2\lambda)(\sum_{k\in\mathbb{Z}}a_kL_k+\delta_{b+1,0}\sum_{k\in\mathbb{Z}}b_kG_k)$. Then
\begin{eqnarray}
&&\phi_\lambda(L_i,L_j)=(\partial+2\lambda)(\sum_{k\in\mathbb{Z}}a_{k-i-j}L_k
+\delta_{b+1,0}\sum_{k\in\mathbb{Z}}b_{k-i-j}G_k),\label{lem3.3a}\\
&&\phi_\lambda(G_i,G_j)=0,\label{lem3.3b}\\
&&\phi_\lambda(L_i,G_j)=(\partial+(1-b)\lambda)\sum_{k\in\mathbb{Z}}a_{k-i-j}G_k\label{lem3.3c}.
\end{eqnarray}

Conversely, it is easy to prove that any skew-symmetric conformal bilinear map satisfying Eqs.\eqref{lem3.3a}, \eqref{lem3.3b} and \eqref{lem3.3c} is a conformal biderivation.
\end{proof}

\end{document}